\documentclass[reqno]{amsart}
\usepackage{preamble}

\title{Permutations from ranking independent random variables}
\author{Luke Turvey}
\thanks{Funded in part by the Royal Society through a Wolfson
Fellowship held by Alexander Holroyd.}
\date{\today}

\address{School of Mathematics, University of Bristol BS8 1UG, U.K}
\email{jf19820@bristol.ac.uk}

\keywords{Random permutations, shuffle algebra, Mallows distribution,
go-first dice}
\subjclass[2020]{05A05;  60C05; 68P10}

\begin{document}
\begin{abstract} 
    Let $X_1,\ldots,X_n$ be almost surely distinct independent real-valued random variables. Let $\sigma$ be the random permutation of $\{1,\ldots,n\}$ such that $X_{\sigma(1)}<X_{\sigma(2)}<\cdots< X_{\sigma(n)}$. We show that the set of laws of $\sigma$, as the laws of $X_1,\ldots,X_n$ vary, has semialgebraic dimension
    \[
    \sum_{k=2}^n {n \choose k}(k-1)!
    \]
    as a subset of the $(n!-1)$-simplex.
    
    This establishes a conjecture of Babson, Duchin, Iseli, Poggi-Corradini, Thurston, and Tucker-Foltz who proved that the dimension is upper bounded by the above expression.  We prove their conjecture by constructing a family of finitely supported laws for $X_1,\ldots,X_n$ that provides the correct dimension. We also give an alternative proof of the upper bound using a theorem of Radford. Furthermore, we show that the Mallows law on permutations cannot arise as a law of $\sigma$ for $n\geq 4$, answering a question of Ed Crane.
\end{abstract}
\maketitle

\section{Introduction}
Let $X_1,\ldots,X_n$ be independent $\R$-valued random variables with laws $\mu_1,\ldots,\mu_n$. We make the additional assumption that $\P(X_i = X_j)=0$ for $i \neq j$. Equivalently $\mu_i$ and $\mu_j$ have no common atoms for $i \neq j$. 
Let $S_n$ denote the symmetric group on $n$ elements. For each $\sigma \in S_n$ define
\begin{equation} \label{eq:p_sigma_def}
p_\sigma := \P(X_{\sigma(1)}<X_{\sigma(2)}<\cdots< X_{\sigma(n)}).
\end{equation}
Thus $(p_\sigma)_{\sigma \in S_n}$ is a probability distribution on $S_n$. We call this law \textbf{the ranking law of $\mu_1,\ldots, \mu_n$}. We say a probability distribution $p$ on $S_n$ is \textbf{achievable} if some probability measures $\mu_1,\ldots,\mu_n$ have ranking law $p$.
Let $\Delta(S_n)$ denote the simplex of probability distributions on $S_n$, namely the set of non-negative vectors $(p_\sigma)_{\sigma \in S_n}$ whose elements sum to $1$. Let
\[
\AA_n := \{p \in \Delta(S_n) : p \text{ is achievable} \}
\]
be the set of achievable laws. Babson, Duchin, Iseli, Poggi-Corradini, Thurston  and Tucker-Foltz \cite{babson} proved that $\AA_n$ is semialgebraic, meaning that it can be described by a finite Boolean combination of polynomial equalities and inequalities in the variables $p_\sigma$.

For a semialgebraic set $S \subset \R^d$, we write $\dim S$ for its real semialgebraic dimension. For the definition, see Appendix \ref{app:inverse_func_thm} or Chapter $2$ in Bochnak, Coste and Roy \cite{bochnak_coste_roy_real_algebraic_geometry}. For the reader's convenience we collect relevant results about semialgebraic dimension in Appendix \ref{app:inverse_func_thm}. The simplex $\Delta(S_n)$ has semialgebraic dimension $n!-1$. 
\begin{theorem} \label{thm:dimension}
    We have 
    \[
    \dim(\AA_n) = \sum_{k=2}^n {n \choose k}(k-1)!
    \]
    for all $n \geq 2$.
\end{theorem}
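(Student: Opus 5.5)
We prove the upper and lower bounds separately.

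\textbf{Upper bound.} We first reduce to finitely supported laws. Since $\AA_n$ is semialgebraic, it suffices to show it lies in the image of a polynomial map from a space of dimension $D_n:=\sum_{k=2}^n\binom nk(k-1)!$, or in a variety of that dimension. Every achievable law arises from finitely supported $\mu_i$ (after a monotone reparametrisation to a common finite grid). Any ranking law is then a sum over words: if $X_i$ takes value at position $j$ of a grid with probability $a_{i,j}$, then $p_\sigma$ is a sum of products $a_{\sigma(1),j_1}\cdots a_{\sigma(n),j_n}$ with $j_1<\dots<j_n$. Equivalently, $p$ is the coefficient vector of the permutation-words in the product of the ``letter series'' $\prod_j\bigl(1+\sum_i a_{i,j}x_i\bigr)$ in the shuffle/quasi-shuffle algebra sense. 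More precisely, $p$ is the multilinear part of a group-like element: a product of exponentials in the completed free associative algebra. Its logarithm is a Lie series. Projecting to the multilinear component in which each letter appears at most once, $p$ is determined by the multilinear Lie polynomials in the subsets of letters. By Radford's theorem (the shuffle algebra is free on Lyndon words), the multilinear component on a $k$-set of letters of the free Lie algebra has dimension $(k-1)!$. Summing over subsets of size $k\ge2$ (size-$1$ parts are fixed by normalisation, since $\mu_i$ has total mass $1$) gives the parameter count $D_n$. Then $\AA_n$ is contained in the image of a polynomial map (the exponential truncated to multilinear terms) from $\R^{D_n}$, so $\dim\AA_n\le D_n$. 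This bound is also available from the paper of Babson et al.

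\textbf{Lower bound.} This is the main obstacle. We need an explicit family of finitely supported laws, depending on $D_n$ real parameters, whose ranking-law map has an injective differential at some point. We then invoke the inverse function theorem and dimension facts from Appendix~\ref{app:inverse_func_thm} to get $\dim\AA_n\ge D_n$. The concrete plan is to index the parameters by pairs $(A,w)$ with $A\subseteq[n]$, $|A|=k\ge2$, and $w$ a Lyndon word (cyclic arrangement) on $A$; there are $(k-1)!$ of these. For each such pair, we insert a small block of consecutive atoms on the real line: for each letter of $w$ in order, we give $X_{w_m}$ an atom of mass $\varepsilon t_{A,w}$ placed in that block's order. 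Far from all blocks, each $X_i$ carries its bulk mass at a base location, with the base locations ordered generically. To first order in $\varepsilon$, the perturbation of $p$ contributed by block $(A,w)$ is a vector supported on permutations in which the elements of $A$ appear in the pattern $w$ relative to the base ordering. We then need to show these $D_n$ derivative vectors are linearly independent. We order the blocks by $|A|$ and then lexicographically, and we look for a ``leading'' permutation for each block: one that receives nonzero derivative from that block but not from earlier blocks, giving a triangular matrix. If first-order independence fails because the block contributions only see the multilinear Lie part, we instead use the $\varepsilon^{k-1}$-order term. Here multiple blocks interact, and Radford's basis (Lyndon words) provides the triangularity via the standard leading-term property of Lyndon bracketings.

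Finally, we combine the two bounds. The local injectivity gives a semialgebraic subset of $\AA_n$ homeomorphic to an open subset of $\R^{D_n}$, hence $\dim\AA_n\ge D_n$. Together with the upper bound, this yields equality for all $n\ge2$.
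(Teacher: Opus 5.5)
Your upper bound is valid and is dual to the paper's argument. The paper combines the shuffle identities \eqref{eq:disjoint_block_eq} with Radford's theorem to write every $q_w$ as a polynomial in the Lyndon marginals $q_\ell$. You instead take the logarithm of the group-like generating series and use that the multilinear Lie polynomials on $k$ letters form a space of dimension $(k-1)!$. The reduction to finite supports is misjustified, though: a monotone reparametrisation cannot turn a non-atomic law into a finitely supported, non-colliding one. Either cite Theorem 5.4 of Babson et al., or observe that finitely supported ranking laws are dense among achievable laws and that the closure of the image of a polynomial map from a $d(n)$-dimensional space still has dimension at most $d(n)$.

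The genuine gap is the lower bound, which is the new content of the theorem. Your block construction fails at first order. At order $\varepsilon$ only configurations with exactly one variable at a block atom contribute, so the column for $t_{A,w}$ is $\varepsilon\sum_{i\in A}(\delta_{\pi_{i,g}}-\delta_{\mathrm{base}})+O(\varepsilon^2)$. Here $\pi_{i,g}$ is the base order with $i$ moved into the gap $g$ containing the block. This does not depend on $w$, so your claim that it is supported on permutations showing the pattern $w$ is false. Moreover, all such vectors span a space of dimension at most $n^2<d(n)$ for $n\geq 4$. The full pattern $w$ only becomes visible at order $\varepsilon^{|A|}$, and your fallback stops exactly where the work begins. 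Any $d(n)\times d(n)$ minor of the Jacobian has entries that are polynomials in $\varepsilon$ of mixed orders. In its Leibniz expansion, products of low-order off-diagonal entries can reach the same total power of $\varepsilon$ as the product of the order-$\varepsilon^{|A|}$ pattern terms. Such entries come from single-letter moves and from letters of $A$ split across several blocks. Nothing you wrote rules out cancellation. The ``leading-term property of Lyndon bracketings'' concerns the Lie elements $P_\ell$ and says nothing about this contamination. No nonzero maximal minor is ever exhibited.

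The paper supplies the missing mechanism by inducting on $n$, which keeps each new batch of parameters separate from the old ones. Lemma \ref{lem:making_good} perturbs $\mu_1,\ldots,\mu_{n-1}$ so that $1$ and the functions $Q_u$, $u\in W(n-1)$, are linearly independent. Here $Q_u(t)$ is the probability that $X_{u_1}<\cdots<X_{u_k}<t$. Lemma \ref{lem:staying_good} preserves the old rank $d(n-1)$. Lemma \ref{lem:inserting_Xn} places the $D(n)+1$ atoms of $X_n$ at points $y_0,\ldots,y_{D(n)}$ where the matrix with rows $(1,(Q_u(y_j))_{u})$ is nonsingular. Since $q_{un}$ is the expectation of $Q_u(X_n)$, it is linear in the new masses, while the $(n-1)$-marginal coordinates do not depend on them. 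The Jacobian is therefore block lower-triangular of rank $d(n-1)+D(n)=d(n)$, with no asymptotic expansion needed.
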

For notational simplicity, let $d(n) = \sum_{k=2}^n {n \choose k}(k-1)!$. As $n \rightarrow \infty$, $d(n) \sim e (n-1)!$ so the dimension of $\AA_n$ is asymptotically a fraction $e/n$  of the ambient dimension of $\Delta(S_n)$. It has been shown in \cite{babson} that $\dim \AA_n \leq d(n)$. We give a new proof of this upper bound in Section \ref{sec:upper_bound} using a result from Radford \cite{radford}. The lower bound $\dim \AA_n \geq d(n)$ was further conjectured in \cite{babson}, see Conjecture $5.14$, and experimentally verified for $n \leq 7$. We prove this conjecture in Section \ref{sec:lower_bound}.

Sorting laws of independent random variables have been studied in random utility theory. The exact model we consider has previously been studied by Joe \cite{Joe2000}, in which a collection of algebraic inequalities of the probabilities $(p_\sigma)_{\sigma \in S_n}$ was derived. These inequalities were used as tests to check whether ranking data came from an independent random utility model.

More recently, the authors of \cite{babson} studied the same model in connection with random minimum spanning trees. Here, independent edge weights on the complete graph induce a ranking law on the set of edges. This ranking law is pushed forward to a law on minimum spanning trees by Kruskal's algorithm. As mentioned above, they showed that the set $\AA_n$ is semialgebraic and established the upper bound in Theorem \ref{thm:dimension}.

A special case of these ranking laws arises in the study of go first dice \cite{ford2023gofirst,purcell2024building}. Go first dice are a set of finitely supported measures (which can be thought of as dice) whose ranking law is the uniform law on $S_n$. The authors of \cite{babson} show that every ranking law can be realised by finitely supported measures, see Theorem $5.4$ \cite{babson}. A central question is how small the supports of these measures can be. We show that a full dimensional subset of $\AA_n$ can be realised by finitely supported measures, each with supports whose size is upper bounded by $\lfloor e(n-1)! \rfloor$, see Remark \ref{rem:support_bound}.

 Now we summarise the main ideas used in the proof of Theorem \ref{thm:dimension}. As noted in \cite{babson}, a key observation is that orders on disjoint sets of indices are independent. More precisely for disjoint sets of indices $A=\{a_1,\ldots,a_k\}$ and $B=\{b_1,\ldots,b_\ell\}$ we have
\begin{equation} \label{eq:disjoint_indices}
\P(X_{a_1}<\cdots<X_{a_k} \text{ and } X_{b_1}<\cdots<X_{b_\ell})=\P(X_{a_1}<\cdots<X_{a_k})\P(X_{b_1}<\cdots<X_{b_\ell}).
\end{equation}

In fact this defines a semialgebraic set that every ranking law must lie in. Using a result of Radford \cite{radford} this semialgebraic set turns out to have dimension $d(n)$. This provides the upper bound. This more streamlined proof differs from the one given in \cite{babson} which constructs sufficiently many independent equations satisfied by all ranking laws and shows their derivatives at the uniform law are independent.

For the lower bound, we construct a family of measures $\mu_1(\theta),\ldots,\mu_n(\theta)$ smoothly parameterised by $\theta$ taking values in some parameter space $\Theta\subset \R^{d(n)}$. Let $F:\Theta \rightarrow \Delta(S_n)$ be the map $F:\Theta \rightarrow \Delta(S_n)$ defined by
\[
F(\theta) =  (p_\sigma(\theta))_{\sigma \in S_n}
\]
where $p_\sigma(\theta)$ is given by \eqref{eq:p_sigma_def} for independent $X_i \sim \mu_i(\theta), 1 \leq i \leq n$. A standard consequence of the inverse function theorem, see Proposition \ref{prop:inverse_func_thm}, is that if $F:U \subset \R^d \rightarrow \Delta(S_n)$ is a smooth map with $F(U) \subset S$ and $\text{rank}( D F(\theta_0)) \geq d(n)$ for some $\theta_0 \in U$ then $\dim S \geq d(n)$. Here $D$ denotes the differential operator. 

The set of measures $\mu(\theta)=(\mu_1(\theta),\ldots, \mu_n(\theta))$ we construct are in fact finitely supported atomic measures where $\theta$ is a vector of atom masses. 

We further show that algebraic relations such as \eqref{eq:disjoint_indices} exclude some very natural probability laws on the symmetric group. In particular, the Mallows law is not achievable.
For a permutation $\sigma \in S_n$ define
\[
\text{inv}(\sigma) = \#\{i<j: \sigma(i)>\sigma(j)\}.
\]

For $q \geq 0$, the Mallows$_n(q)$ law on $S_n$ is defined by 
\[
\P_q(\sigma) = \frac{q^{\text{inv}(\sigma)}}{Z_n(q)}
\]

where $Z_n(q):= \sum_{\sigma \in S_n} q^{\text{inv}(\sigma)}$.

\begin{theorem} \label{thm:mallows_unachievable}
    For $q \neq 0,1$ and $n \geq 4$ the Mallows$_n(q)$ law on $S_n$ is not achievable.
\end{theorem}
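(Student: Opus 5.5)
The plan is to exhibit one product relation of the form \eqref{eq:disjoint_indices} that every achievable law satisfies but the Mallows law violates. Write $\tau=\sigma^{-1}$, so that $X_i<X_j$ exactly when $\tau(i)<\tau(j)$. Since $\mathrm{inv}(\sigma)=\mathrm{inv}(\sigma^{-1})$, the permutation $\tau$ is again Mallows$_n(q)$ distributed. An achievable law must then satisfy
\[
\P(\tau(1)<\tau(3),\ \tau(2)<\tau(4)) = \P(\tau(1)<\tau(3))\,\P(\tau(2)<\tau(4)).
\]
I will show this fails for Mallows$_n(q)$ whenever $q\neq 0,1$.

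I deliberately pair non-adjacent positions. With adjacent pairs such as $\{1,2\},\{3,4\}$, the two commuting transpositions split $S_n$ into classes of four permutations with inversion counts $m,m+1,m+1,m+2$. The relation then holds automatically, with both sides equal to $(1+q)^{-2}$, so it detects nothing.

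\textbf{Reduction to $n=4$.} I use the Lehmer code $c_i(\tau)=\#\{j<i:\tau(j)>\tau(i)\}$, for $1\le i\le n$. Then $\mathrm{inv}(\tau)=\sum_i c_i$, and the map $\tau\mapsto(c_1,\dots,c_n)$ is a bijection onto $\prod_i\{0,\dots,i-1\}$. Hence under Mallows$_n(q)$ the $c_i$ are independent truncated geometric variables with $\P(c_i=k)\propto q^k$. The relative order pattern of $(\tau(1),\dots,\tau(4))$ is determined by $(c_1,\dots,c_4)$, so this pattern is Mallows$_4(q)$ distributed. Both events above depend only on that pattern, so it suffices to compute in $S_4$.

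\textbf{Computation in $S_4$.} In terms of the codes: $\tau(1)<\tau(3)$ depends on $(c_2,c_3)$, and $\tau(2)<\tau(4)$ depends on $(c_3,c_4)$. The two events share $c_3$, which is exactly the source of the dependence. I will tabulate each event over the $24$ code vectors, or equivalently over the permutations of $S_4$, and compute the three probabilities as rational functions of $q$ with denominator built from $Z_4(q)=(1+q)(1+q+q^2)(1+q+q^2+q^3)$.

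Clearing denominators, the difference of the two sides becomes a polynomial $P(q)$. I expect $P$ to factor as $q^a(1-q)^b$ times a polynomial with positive coefficients, so that it has no roots on $(0,\infty)\setminus\{1\}$. This proves the theorem for $q>0$, $q\neq1$.

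The main obstacle is this explicit computation and the factorization of $P(q)$. If the chosen pairs unexpectedly give a degenerate factor, I will try the other non-adjacent pairing $\{1,4\},\{2,3\}$ instead; the same code-based bookkeeping applies.
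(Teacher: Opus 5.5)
Your proposal follows the paper's proof: the same events $A=\{X_1<X_3\}$ and $B=\{X_2<X_4\}$, the same reduction to $n=4$ (your Lehmer-code argument justifies the step, merely asserted in the paper, that the relative order of $X_1,\dots,X_4$ is Mallows$_4(q)$), and the computation you defer comes out as you predict, with $\P(A)=(2q+1)/\bigl((q+1)(q^2+q+1)\bigr)$, $\P(A\cap B)=(1+3q+q^2+q^3)/Z_4(q)$ and $\P(A\cap B)-\P(A)\P(B)=q^3(q-1)^2/\bigl((q+1)^2(q^2+1)(q^2+q+1)^2\bigr)$.

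Two side remarks are wrong, though you do not rely on either. First, $\{\tau(2)<\tau(4)\}$ depends on $(c_2,c_3,c_4)$, not only on $(c_3,c_4)$: the codes $(0,0,0,2)$ and $(0,1,0,2)$ give $\tau=1342$ and $\tau=3142$, which disagree on this event. Second, your fallback pairing $\{1,4\},\{2,3\}$ would fail. The pair $\{2,3\}$ is adjacent, and your own swap argument (exchanging positions $2,3$ preserves $\{\tau(1)<\tau(4)\}$ and shifts $\mathrm{inv}$ by $\pm 1$) makes that relation hold identically. So $\{1,3\},\{2,4\}$ is the only pairing of four indices that can work.
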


\section*{Acknowledgement}
I thank Marton Balazs, Alexander Holroyd and Daniel Meriaz for valuable discussions and suggestions that improved this work. I am particularly grateful to Ed Crane for posing the question of whether the Mallows law is achievable and for valuable discussions concerning this problem. I also acknowledge the use of ChatGPT (OpenAI) for copy-editing and suggestions concerning the exposition. All mathematical claims are my responsibility.
\section{Preliminaries}

For any measurable space $(\Omega,\FF)$ define $\Delta(\Omega)$ as the set of probability measures on that space. Throughout we assume $S_n$ and $\R$ are equipped with the discrete and Borel $\sigma$-algebras, respectively. We say the measures $(\mu_1,\ldots,\mu_n) \in \Delta(\R)^n$ are \textbf{non-colliding} if for independent random variables $X_1,\ldots,X_n$ with laws $\mu_1,\ldots,\mu_n$ we have
\[
\P(X_i=X_j)=0 \quad \text{ for } i \neq j.
\]
Let $\Xi_n$ be the space of non-colliding $n$-tuples of probability measures.
We define the \textbf{ranking law map} $F:\Xi_n \rightarrow \Delta(S_n)$ as $F(\mu_1,\ldots,\mu_n) = (p_\sigma)_{\sigma \in S_n}$ where
\[
p_\sigma := \P(X_{\sigma(1)}<X_{\sigma(2)} < \cdots < X_{\sigma(n)})
\]
for independent random variables $X_1,\ldots,X_n$ with laws $\mu_1,\ldots,\mu_n$. We say a law $p \in \Delta(S_n)$ is \textbf{achievable} if there exists some $\mu \in \Xi_n$ such that $F(\mu)=p$. Denote the set of achievable laws in $\Delta(S_n)$ by $\AA_n$.

Let $[n] = \{1,\ldots,n\}$. Throughout, a \textbf{word} on $[n]$ means a finite sequence $w=w_1 w_2 \ldots w_k$ of elements of $[n]$. We write $\text{supp}(w)=\{w_1,\ldots, w_k\}$ as the \text{support} of $w$.  We call a word containing no repeated letters \textbf{injective}. Let $A \subset [n]$ and write $\sigma |_A$ as the subword obtained by deleting all letters not in $A$ and preserving the order of the remaining letters. For two words $u,v$ on $[n]$ we write $u<v$ to denote the \textbf{lexicographic order}. Namely $u<v$ if $u$ is a proper prefix of $v$ or when the letter at which they differ is smaller in $u$ than in $v$. A non-trivial rotation of a word $w$ is a word $vu$ arising from a factorisation $w=uv$ where $u$ and $v$ are \textbf{non empty}.

We identify a permutation $\sigma \in S_n$ with the word $\sigma(1)\sigma(2) \cdots \sigma(n)$. Let $p=(p_\sigma)_{\sigma \in S_n} \in \Delta(S_n)$. For any injective word $w$ on $[n]$ define 
\[
q_w(p) = \sum_{\substack{\sigma \in S_n :\\ 
 \sigma |_{\supp(w)} =w}} p_\sigma.
\]
Note that if $w$ has support $[n]$ then $q_w(p) = p_w$.

Suppose $u$ and $v$ are words with disjoint supports. A \textbf{shuffle} of $u$ and $v$ is a word $w$ such that $\supp(w)= \supp(u) \cup \supp(v)$ and $w|_{\supp (u)}=u$ and $w|_{\supp (v)} = v$. We denote the set of all shuffles $u \circ v$.

For example, if $u=13, v=24$ then $u \circ v= \{1234,1324,1243,2134,2143, 2413\}$.

For a set of words $A$ each with support disjoint from $u$ define $A \circ u = u \circ A= \cup_{v \in A} u \circ v$. For words $u,v,w$ with pairwise disjoint supports write $u \circ v \circ w= u \circ(v \circ w)$ and note that $\circ$ is associative and commutative.

If $p \in \Delta(S_n)$ is an achievable law, then for every pair of injective words $u,v$ with disjoint supports, we have
\begin{equation} \label{eq:disjoint_block_eq}
q_u(p) q_v(p) = \sum_{w \in u \circ v} q_w(p).
\end{equation}
This follows from the observation that if $p$ is achievable then it can be realised by independent random variables $X_1,\ldots,X_n \sim \mu_1,\ldots,\mu_n$ where orders on disjoint indices are independent.

\section{Upper bound} \label{sec:upper_bound}
In this section we give a new proof that $\dim \AA_n \leq d(n)$ already proved in \cite{babson}. Denote $V_n \subset \Delta(S_n)$ as the set of laws that satisfy the shuffle identities \eqref{eq:disjoint_block_eq}. We know $\AA_n \subset V_n$ and hence $\dim(\AA_n) \leq \dim(V_n)$.

The following results are a consequence of a theorem of Radford \cite{radford}. We provide an exposition of the necessary results for completeness following the paper by Melançon and Reutenauer \cite{melancon}. Their result holds in the more general context of words with repeated letters. We only provide the results for injective words.

\begin{definition}
    A \textbf{Lyndon word} is a nonempty word that is strictly smaller in lexicographic order than all of its non-trivial rotations.
\end{definition}

For example, $132$ is a Lyndon word but $321$ and $213$ are not since \[
132<213<321.
\]

\begin{theorem} [Lyndon, Theorem $5.1.5$ in \cite{lothaire}] \label{thm:lyndon_factorisation}
    Any word $w$ may be written uniquely as a nonincreasing product of Lyndon words. More precisely any word $w$ of $[n]$ can be written as
    \[
    w = \ell_1\ell_2\cdots \ell_s, \qquad \ell_1\geq\ell_2\geq\cdots \geq\ell_s
    \]
    for Lyndon words $\ell_1,\ell_2,\ldots,\ell_s$.
\end{theorem}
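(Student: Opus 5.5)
We prove existence and uniqueness separately, each by induction on the length of $w$, using the standard characterisation of Lyndon words: a word $\ell$ is Lyndon if and only if it is strictly smaller than each of its proper nonempty suffixes. We first check this equivalence directly from the rotation definition. If $\ell = uv$ and $v<\ell$, then we compare $vu$ with $uv$; if $v$ is a prefix of $\ell$ a short periodicity argument settles it. For injective words the whole situation simplifies: no proper suffix can be a prefix, since the letters are distinct, so all comparisons are decided at the first letter.

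For \emph{existence}, we use the following facts.
\begin{itemize}
\item Every single letter is Lyndon, so $w$ can be written as a product of Lyndon words, for instance its letters.
\item Among all such factorisations, take one with the fewest factors.
\item If $\ell,m$ are Lyndon with $\ell<m$, then $\ell m$ is Lyndon.
\end{itemize}
To prove the third fact, we show that each proper suffix of $\ell m$ exceeds $\ell m$. There are two kinds of suffix.
\begin{itemize}
\item Suffixes of the form $s m$ with $s$ a proper suffix of $\ell$. Here $s>\ell$, and $s$ is not a prefix of $\ell$ (this is automatic in the injective case), so $sm>\ell m$.
\item Suffixes of $m$ itself. These satisfy $\ge m>\ell m$. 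The inequality $m>\ell m$ needs care when $\ell$ is a prefix of $m$; then we compare $m$ with $\ell m$ after stripping $\ell$ and use that suffixes of $m$ exceed $m$.
\end{itemize}
Hence, if a minimal factorisation had two adjacent factors with $\ell_i<\ell_{i+1}$, merging them would give fewer factors, a contradiction. So a minimal factorisation is nonincreasing.

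For \emph{uniqueness}, we show that in any nonincreasing Lyndon factorisation $w=\ell_1\cdots\ell_s$, the last factor $\ell_s$ is the lexicographically smallest nonempty suffix of $w$. Then $\ell_s$ is determined by $w$, and induction on $w$ with $\ell_s$ removed finishes the argument.

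To prove this claim, let $t$ be any nonempty suffix of $w$, say $t = t' \ell_{j+1}\cdots\ell_s$ where $t'$ is a nonempty suffix of $\ell_j$. Then $t'\ge \ell_j\ge \ell_s$, since Lyndon words are at most any of their suffixes. One then deduces $t\ge \ell_s$, with equality only when $t=\ell_s$. The delicate case is when $\ell_s$ is a prefix of $t'$: then $t$ is longer than $\ell_s$ and so strictly larger in lexicographic order, because a proper prefix is smaller. Otherwise the first differing letter already makes $t>\ell_s$.

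The main obstacle is the bookkeeping in the lexicographic comparisons when one word is a prefix of another, in the merge lemma and in the smallest-suffix claim. Since the paper only uses injective words, I would handle these cases via the observation that distinct letters make first-letter comparisons decisive. Alternatively, I would simply cite Lothaire, Theorem 5.1.5, for the general case.
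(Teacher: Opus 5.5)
The paper does not prove this statement; it quotes it from Lothaire. Your argument is a correct proof.

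\textbf{Existence.} This half is the textbook argument: a Lyndon factorisation with the fewest factors must be nonincreasing, because $\ell<m$ Lyndon implies $\ell m$ Lyndon. Your check of this merge lemma is sound, including the case where $\ell$ is a prefix of $m$. In your first bullet, ``$s$ is not a prefix of $\ell$'' holds for arbitrary words, not only injective ones, since $s>\ell$ and $|s|<|\ell|$.

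\textbf{Uniqueness.} Here you take a different route from the usual one. The standard argument compares the first factors of two factorisations: if $|\ell_1|>|\ell_1'|$, write $\ell_1=\ell_1'\cdots\ell_i'u$ with $u$ a nonempty prefix of $\ell_{i+1}'$, and obtain $\ell_1<u\le\ell_{i+1}'\le\ell_1'<\ell_1$. You instead identify $\ell_s$ intrinsically as the lexicographically least nonempty suffix of $w$ and peel it off by induction. This costs a little more prefix bookkeeping, but it yields the extra fact that the last factor is the minimal suffix.

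\textbf{The step you only sketch.} Both routes rest on the fact that a Lyndon word is strictly smaller than each of its proper suffixes. For arbitrary words you only gesture at this (``a short periodicity argument''). It is worth writing out, since the paper also uses it without proof when proving Radford's theorem.

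The only nontrivial case is a proper suffix $v$ of $\ell=uv$ that is also a prefix, say $\ell=vt$ with $|t|=|u|$. Then $\ell<vu$ forces $t<u$, while $\ell<tv$ forces $u<t$, a contradiction. In every other case $v$ and $\ell$ differ at a position $\le|v|$, so $\ell<vu$ transfers to $\ell<v$.

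\textbf{Injective words.} This is the only case the paper needs, and here everything collapses. An injective word is Lyndon iff it begins with its smallest letter, and nonincreasing means strictly decreasing first letters. The factorisation is therefore forced to cut $w$ immediately before each left-to-right minimum, which gives existence and uniqueness at once.
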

The following is the key result that enables us to prove the upper bound. 
\begin{theorem}[Radford \cite{radford}] \label{thm:radford}
    Let $w$ be a word with distinct letters and let 
    \[
    w = \ell_1 \ell_2 \cdots \ell_s
    \] 
    be its Lyndon factorisation. Then $w$ is the lexicographically largest word appearing in 
    \[
    \ell_1 \circ \ell_2 \circ \cdots \circ \ell_s.
    \] 
\end{theorem}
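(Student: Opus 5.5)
We argue by induction on the number $s$ of Lyndon factors. Two facts about Lyndon words and lexicographic order on injective words will be used throughout. First, for injective words every comparison is decided at the first differing position, since no word of a shuffle set is a proper prefix of another of the same support. Second, the Lyndon condition has a simple form for injective words. A word $\ell$ with distinct letters is Lyndon if and only if its first letter is its smallest letter: if the minimum is first, every non-trivial rotation begins with a larger letter; otherwise the rotation starting at the minimum is smaller. Consequently, in the factorisation $w=\ell_1\cdots\ell_s$ the condition $\ell_1\ge \cdots\ge \ell_s$ says exactly that the first letters satisfy $a_1>a_2>\cdots>a_s$, where $a_i=\min\supp(\ell_i)$. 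Equality of first letters is impossible because the supports are disjoint. Thus the factorisation is obtained by cutting $w$ before each left-to-right minimum.

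For the induction, let $u=\ell_2\cdots\ell_s$. Its Lyndon factorisation is $\ell_2\cdots\ell_s$, so by induction $u$ is the largest word of $\ell_2\circ\cdots\circ\ell_s$. Any word of $\ell_1\circ\cdots\circ\ell_s$ is a shuffle of $\ell_1$ with some $v\in \ell_2\circ\cdots\circ \ell_s$. It therefore suffices to prove two things. The first is that for fixed $v$ the largest shuffle in $\ell_1\circ v$ is $\ell_1 v$, provided every letter of $\supp(v)$ that could start competition is handled, using $a_1>\min \supp(v)$. The second is that $\ell_1 v\le \ell_1 u$, which is immediate from $v\le u$.

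The first point is the main step. Let $x\in \ell_1\circ v$, and compare $x$ with $\ell_1 v$. While $x$ copies letters of $\ell_1$ in order, the two words agree. Suppose the first disagreement occurs at a position where $x$ takes a letter $b$ of $v$ while $\ell_1 v$ still has a letter $c$ of $\ell_1$. We must show $c>b$ in that case, and this is where the greedy comparison is delicate: a letter of $v$ can be larger than a letter of $\ell_1$. So I would not compare against $\ell_1 v$ directly. Instead I would compare $x$ with the larger candidate, namely $w$ itself, and argue using the left-to-right-minimum structure.

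Concretely, let $x$ be any word in $\ell_1\circ\cdots\circ\ell_s$ and let $j$ be the first position where $x$ and $w$ differ. Up to position $j-1$ both have used the same letters. At position $j$, $w$ places the next letter $c$ of some factor $\ell_i$, and $x$ places the next available letter $b$ of another factor $\ell_k$ with $k\ne i$. We need $c>b$.

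There are two cases. If $k>i$, then factor $\ell_k$ has not started in $w$ before position $j$, so in $x$ its first letter $a_k$ must be the letter $b$ placed at position $j$, since $x$ agrees with $w$ so far. Now $a_k<a_i\le c$, so $b<c$. If $k<i$, then $\ell_k$ is exhausted in $w$ before position $j$, so $x$ cannot take a letter from it; this case is impossible. Hence $x<w$ unless $x=w$, which completes the proof. The only subtle point is the second case, which shows that the simple "factors appear in order in $w$" structure makes the induction unnecessary: the direct first-difference argument suffices.
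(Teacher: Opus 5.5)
Your first-difference argument is correct and is essentially the paper's proof: both show that a shuffle which first departs from $w$ must place there the first letter of a later factor $\ell_k$, and your observation that an injective word is Lyndon exactly when it begins with its minimum simply collapses the paper's two subcases (the differing letter of $w$ being the first letter of its factor, or lying in a proper suffix) into the single chain $b=a_k<a_i\le c$. Two small points: the induction paragraph is an unneeded detour whose worry is unfounded, because the letter of $v$ used at the first disagreement with $\ell_1 v$ must be the first letter of $v$, which is below $a_1$; and you should state explicitly that $w$ itself lies in $\ell_1\circ\cdots\circ\ell_s$.
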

For completeness, we include the proof of Theorem \ref{thm:radford} in Appendix \ref{app:radford}. We follow the proof from Section $4$ in \cite{melancon} which cites the original result from Radford \cite{radford}.

\begin{proof} [Proof of Theorem \ref{thm:dimension} upper bound.]
 Let $w \in S_n$, which we identify with the word $w=w_1,\ldots w_n$. Let $\ell_1,\ell_2,\ldots \ell_s$ be its Lyndon factorisation. Hence Theorem \ref{thm:radford} and induction on \eqref{eq:disjoint_block_eq} gives us that for any $p \in V_n$,
\[
q_{\ell_1}(p)q_{\ell_2}(p)\cdots q_{\ell_s}(p) = q_w(p)+ \sum_{\substack{v \in \ell_1 \circ \ell_2 \circ \cdots \circ \ell_s: \\ v<w}} q_v(p).
\]
Rearranging, we have the following decomposition of $q_w(p)$,
\begin{equation} \label{eq:lyndon_decomp}
 q_w(p) = q_{\ell_1}(p)q_{\ell_2}(p)\cdots q_{\ell_s}(p)-\sum_{\substack{v \in \ell_1 \circ \ell_2 \circ \cdots \circ \ell_s: \\ v<w}} q_v(p).
\end{equation}
For each term in the sum, if $v$ is not a Lyndon word we decompose $q_v(p)$ using \eqref{eq:lyndon_decomp}. Iterating, we can express $q_w(p)$ as a polynomial in the variables $q_\ell(p)$ where $\ell$ is a Lyndon word. Each application of \eqref{eq:lyndon_decomp} replaces a term $q_v(p)$ by a polynmial involving terms with $q_z(p)$ with $z<v$. Since there are only finitely many injective words with the same support, this procedure terminates.

Further note that $q_\ell(p) = 1$ if $|\ell|=1$. Therefore fixing the values of $q_\ell(p)$ for each Lyndon word $\ell$ such that $|\ell| \geq 2$ along with the condition that $\sum_{\sigma \in S_n} p_\sigma =1$ completely determines $q_w(p)$ for all injective words $w$ and hence completely determines $p$. More precisely, the map 
\[
p \mapsto (q_\ell(p) : \ell \text{ is a Lyndon word and } |\ell| \geq 2)
\] 
is injective on $V_n$. The set $V_n$ is a real semialgebraic subset of $\Delta(S_n)$, and the above coordinate projection is a semialgebraic injective map from $V_n$ into $\R^{d(n)}$. Since semialgebraic dimension is monotone under injective semialgebraic maps, see \cite[Theorem 2.8.8]{bochnak_coste_roy_real_algebraic_geometry},
\[
\dim(V_n) \leq \# \{\text{injective Lyndon words of $[n]$ of length $\geq 2$}\}
\]
To conclude the proof we count the number of Lyndon words of length $\geq 2$ with distinct letters taking letters in $[n]$.

There are ${n \choose k} k!$ injective words of length $k$ and note there are $k$ distinct rotations of an injective word. Among the $k$ rotations of a word of length $k$ there is exactly one Lyndon word. Thus there are ${n \choose k} k!/k$ Lyndon words in the injective words of length $k$. Therefore
\[
\# \{\text{injective Lyndon words of $[n]$ of length $\geq 2$}\}=d(n)
\]
and $\dim \AA_n \leq \dim V_n \leq d(n)$.
\end{proof}

\section{Lower bound} \label{sec:lower_bound}
In this section we prove that $\dim \AA_n \geq d(n)$. Let $W(m)$ denote the set of non-empty injective words on $[m]$. 
Let $D(n):=|W(n-1)|$  and note that 
\begin{equation}
    d(n)-d(n-1) = \sum_{j=1}^{n-1} {n-1 \choose j} j! = D(n).
\end{equation}

For real-valued independent random variables $X_1,\ldots,X_n$ with laws $\mu_1,\ldots,\mu_n$ and an injective word $u =u_1\cdots u_k \in W(m)$ define
\[
Q_u(t) := \P(X_{u_1} < \cdots <X_{u_k}<t)
\]
for all $t \in \R$. Set $Q_\emptyset(t):=1$. 
Equivalently, 
\[
Q_u(t) := \int_{\R^k} \one\{x_1<\cdots < x_k <t\} d \mu_{u_1}(x_1)\cdots d \mu_{u_k}(x_k).
\]
Note this definition extends to finite measures as well as probability measures. 
 We say the measures $\mu$ are \textbf{good} if the functions
\[
1, \quad (Q_u)_{u \in W(m)}
\]
are linearly independent over $\R$.

We first describe a sketch of the proof of the lower bound in Theorem \ref{thm:dimension}. The proof proceeds by induction on $n$; see Theorem \ref{thm:induction}. Suppose that there exists a family of measures $\mu(\theta)=(\mu_1(\theta),\ldots,\mu_{n-1}(\theta))$ and some open set $U \subset \R^{d(n-1)}$ for which the ranking map $F(\mu(\theta)):U \rightarrow \Delta(S_{n-1})$  satisfies
\[
\rank D_\theta F(\mu(\theta_0)) \geq d(n-1)
\]
for some $\theta_0 \in U$. For each $\theta\in U$ we add finitely many atoms into the measures $\mu(\theta)$ to make them good whilst preserving this rank condition; see Lemmas \ref{lem:making_good} and \ref{lem:staying_good}. Once $\mu_{n-1}(\theta)$ is good, we construct a new measure $\mu_n(\beta)$ so that 
\[
\rank D_{\theta,\beta} F(\mu_1(\theta_0),\ldots,\mu_{n-1}(\theta_0),\mu_{n}(\beta^0)) \geq d(n)
\]
for some open set $V \subset \R^{D(n)}$ and $\beta^0 \in V$; see Lemma \ref{lem:inserting_Xn}. Proposition \ref{prop:inverse_func_thm} then implies Theorem \ref{thm:induction}.

\begin{lemma} \label{lem:lin_alg}
    Let $r \in \N$. Suppose $f_0,\ldots, f_r$ are linearly independent $\R$-valued functions. Then there exist real numbers $y_0,\ldots y_r$ such that
    \[
    \det ((f_i(y_j)))_{0\leq i,j\leq r} \neq 0.
    \]
\end{lemma}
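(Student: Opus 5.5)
I will argue by induction on $r$, with the induction hypothesis applied to the sub-family $f_0,\ldots,f_{r-1}$. This works because any subfamily of a linearly independent family is again linearly independent. Write $M_r(y_0,\ldots,y_r)$ for the matrix $(f_i(y_j))_{0\le i,j\le r}$. The statement is also meaningful for $r=0$, and I take that as the base case. Since $f_0$ is linearly independent on its own, it is not the zero function. So there is a point $y_0$ with $f_0(y_0)\neq 0$, which is exactly $\det M_0(y_0)\neq 0$.

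For the inductive step, assume $r\ge 1$. The induction hypothesis applied to $f_0,\ldots,f_{r-1}$ gives points $y_0,\ldots,y_{r-1}$ with $\det (f_i(y_j))_{0\le i,j\le r-1}\neq 0$. Keep these points fixed, let the last point be a free variable $t$, and consider
\[
g(t) := \det\bigl(f_i(y_j)\bigr)_{0\le i\le r,\ 0\le j\le r} \quad\text{with } y_r := t .
\]
Expanding along the last column (the column indexed by $j=r$) gives
\[
g(t) = \sum_{i=0}^r c_i f_i(t),
\]
where each $c_i$ is, up to sign, a minor formed from the fixed points $y_0,\ldots,y_{r-1}$ alone, so the $c_i$ do not depend on $t$. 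In particular, $c_r$ equals $\pm\det(f_i(y_j))_{0\le i,j\le r-1}$, which is nonzero by the choice of the $y_j$.

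Thus $g$ is a linear combination of $f_0,\ldots,f_r$ with at least one nonzero coefficient. By linear independence, $g$ is not the zero function, so some $y_r$ satisfies $g(y_r)\neq 0$. This gives the required points $y_0,\ldots,y_r$.

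There is no real obstacle here. The only care needed is to track which minor multiplies $f_r(t)$ in the cofactor expansion, and to check that it is precisely the determinant supplied by the induction hypothesis. I also note that the $f_i$ need only be functions on a common domain (here $\R$, as in the application to the $Q_u$), and that no regularity of the $f_i$ is used.
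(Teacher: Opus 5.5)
Your proof is correct, but it takes a different route from the paper.

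The paper argues by contradiction in one step. If every such determinant vanished, the vectors $(f_0(y),\ldots,f_r(y))$, $y\in\R$, could not contain $r+1$ linearly independent members, so they would span a proper subspace of $\R^{r+1}$. A nonzero vector $(a_0,\ldots,a_r)$ annihilating that subspace then gives $\sum_i a_i f_i\equiv 0$, contradicting independence.

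You instead build the points greedily, one at a time. Cofactor expansion along the last column shows that, once $y_0,\ldots,y_{r-1}$ are fixed, the determinant as a function of the last point is $\sum_i c_i f_i$. The coefficient $c_r$ is the previous, nonzero, determinant, and your bookkeeping of which minor multiplies $f_r$ (and of the sign) is right.

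The paper's argument is shorter, relying on the fact that a spanning family contains a basis. Yours is in effect a constructive proof of that fact in this setting, and it gives slightly more. Any points making the leading $k\times k$ minor nonzero can always be extended. At each stage, the admissible choices for the next point are exactly the points where an explicit nontrivial linear combination of the $f_i$ does not vanish.
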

\begin{proof}
    Suppose no such points exist. Then the span of all vectors $(f_0(y),\ldots f_r(y))$ for every $y$ has dimension $<r+1$. Thus there would be non-zero real coefficients $a_0,\ldots a_r$ such that
    \[
    a_0 f_0(y)+a_1 f_1(y)+\cdots+a_r f_r(y) = 0
    \]
    for every $y$, contradicting linear independence.
\end{proof}

\begin{lemma} \label{lem:inserting_Xn}
    Let $D=D(n)$ and $\mu=(\mu_1,\ldots,\mu_{n-1})$ be a good tuple of finite atomic probability measures with disjoint supports. There exist points $y_0,\ldots,y_{D}\in \R$ such that, for 
    \[
    V:= \left\{(\beta_1,\ldots,\beta_D)\in (0,\infty)^D : \sum_{j=1}^D \beta_j < 1\right\}
    \]
    and $\beta_0 = 1-\sum_{j=1}^D\beta_j$ the probability measure
    \[
    \mu_n(\beta) = \sum_{j=0}^{D} \beta_j \delta_{y_j} \qquad \beta \in V
    \]
     establishes the following: Let $\mu(\beta)= (\mu_1,\ldots,\mu_n(\beta))$. We have
    \[
        \rank \bra{\frac{\partial }{\partial \beta_j}q_{un}(F(\mu(\beta)))}_{u \in W(n-1), 1\leq j \leq D} = D.
    \]
    Moreover, $y_0,\ldots,y_{D(n)}$ are disjoint from $\cup_{i=1}^{n-1} \supp (\mu_i)$.
\end{lemma}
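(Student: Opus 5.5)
The key identity is that for $u \in W(n-1)$, with $X_n \sim \mu_n(\beta)$ independent of $X_1,\ldots,X_{n-1}$,
\[
q_{un}(F(\mu(\beta))) = \P(X_{u_1}<\cdots<X_{u_k}<X_n) = \sum_{j=0}^{D}\beta_j Q_u(y_j),
\]
where $Q_u$ is computed from $\mu_1,\ldots,\mu_{n-1}$ alone and does not depend on $\beta$. Here we need the $y_j$ to avoid the atoms of the $\mu_i$, so that the tuple stays non-colliding. Substituting $\beta_0 = 1-\sum_{j\ge1}\beta_j$ gives
\[
\frac{\partial}{\partial \beta_j} q_{un}(F(\mu(\beta))) = Q_u(y_j) - Q_u(y_0), \qquad 1\le j\le D .
\]
This is independent of $\beta$, so the rank claim reduces to a statement about the $D\times D$ matrix $M=(Q_u(y_j)-Q_u(y_0))_{u\in W(n-1),\,1\le j\le D}$. (Recall $|W(n-1)|=D$.)

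To get $M$ nonsingular, apply Lemma \ref{lem:lin_alg} to the $D+1$ functions $1,(Q_u)_{u\in W(n-1)}$, which are linearly independent because $\mu$ is good. This yields points $y_0,\ldots,y_D$ such that the $(D+1)\times(D+1)$ matrix $N$ has first row $(1,\ldots,1)$ and rows $(Q_u(y_0),\ldots,Q_u(y_D))$ below it, with $\det N\neq0$. Subtract column $0$ from every other column; this does not change the determinant. The first row becomes $(1,0,\ldots,0)$, and expanding along it gives $\det N = \pm\det M$. Hence $\det M\ne0$ and the rank is $D$.

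The remaining point, which I expect to be the only real obstacle, is ensuring the $y_j$ avoid the finite set $S=\cup_{i=1}^{n-1}\supp(\mu_i)$. Each $Q_u$ is a step function, constant on the open intervals between consecutive points of $S$. It is left-continuous with $Q_u(t)$ defined by strict inequality $<t$. Therefore moving $y_j\in S$ slightly to the left does not change any value $Q_u(y_j)$: for $t$ in a small left neighbourhood of an atom $s$, the event $\{\cdots<t\}$ coincides with $\{\cdots<s\}$ almost surely. This replacement preserves $\det N\neq0$.

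Alternatively, one can run the proof of Lemma \ref{lem:lin_alg} with $y$ restricted to $\R\setminus S$. Linear independence still holds on that domain, since every value $Q_u(s)$ with $s\in S$ equals $Q_u(s-\epsilon)$ for small $\epsilon$. Finally, the $y_j$ are distinct automatically, since $\det N\ne0$ forces distinct columns. So $\mu_n(\beta)$ is a genuine probability measure on $D+1$ atoms for all $\beta\in V$, and $\mu(\beta)$ is non-colliding.
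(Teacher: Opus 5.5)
Your proposal is correct and follows the paper's proof essentially step for step. You apply Lemma~\ref{lem:lin_alg} to $1,(Q_u)_{u\in W(n-1)}$ and subtract the $y_0$ entries to obtain the nonsingular matrix $(Q_u(y_j)-Q_u(y_0))$; you work by columns of the transposed matrix, whereas the paper works by rows. You then identify this matrix as the Jacobian via $q_{un}(F(\mu(\beta)))=\sum_j\beta_jQ_u(y_j)$, and shift any $y_j\in S$ slightly to the left, using that each $Q_u$ is a left-continuous step function. Your extra remark that $\det N\neq 0$ forces the $y_j$ to be distinct, even after shifting, is a small point the paper leaves implicit.
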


\begin{proof}
    We apply Lemma \ref{lem:lin_alg} to the functions 
    \[
    f_0 = 1, \quad f_u = Q_u, \quad u \in W(n-1)
    \]
    This gives points $y_0,\ldots, y_D$ such that
    \[
    \det \begin{pmatrix} 
            1 & Q_{u_1}(y_0) & Q_{u_2}(y_0) & \cdots & Q_{u_D}(y_0) \\
            1 & Q_{u_1}(y_1) & Q_{u_2}(y_1) & \cdots & Q_{u_D}(y_1) \\
            \vdots & \vdots & \vdots & \ddots & \vdots \\
            1 & Q_{u_1}(y_D) & Q_{u_2}(y_D) & \cdots & Q_{u_D}(y_D) 
        \end{pmatrix} 
        \neq 0
    \]
    Subtracting the first row from every other row we have that the matrix
    \begin{equation} \label{eq:jacobian_is_full_rank}
    (Q_u(y_j) - Q_u(y_0))_{1\leq j \leq D, u \in W(n-1)} \quad \text{has full rank $D$.}
    \end{equation}
    Let $X_n$ be a random variable with law 
    \[
    \mu_n(\beta) = \sum_{j=0}^{D(n)} \beta_j \delta_{y_j}
    \]
    for $\beta \in V$. Let $\mu(\beta)= (\mu_1,\ldots,\mu_n(\beta))$.
    Then, for each $u \in W(n-1)$, we have
    \[
    q_{un}(F(\mu(\beta))) = \E Q_u(X_n) = \sum_{j=0}^D \beta_j Q_u(y_j) = \bra{1- \sum_{k=1}^D \beta_k} Q_u(y_0) + \sum_{j=1}^D \beta_j Q_u(y_j)
    \]
    and hence
    \[
    \frac{\partial}{\partial \beta_j}q_{un}(F(\mu(\beta))) = Q_u(y_j)-Q_u(y_0).
    \]
    This matrix has full rank $D$ by \eqref{eq:jacobian_is_full_rank}.

    Finally note that $Q_u(t)$ is a step function whose discontinuities lie in the set $S=\cup_{i=1}^{n-1} \supp(\mu_i)$. Thus every $Q_u$ is constant on sufficiently small left-neighbourhoods of $y_i$. Therefore, we may move each $y_i$ slightly to left, if necessary, outside $S$ without changing the values of $Q_u(y_i)$ to avoid colliding atoms. Consequently, the determinant remains non-zero and $\{y_0,\ldots,y_D\} \cap S = \emptyset$.
\end{proof}

\begin{lemma} \label{lem:making_good}
    Given any vector $\mu=(\mu_1,\ldots,\mu_n)$ of probability measures with finite supports, one can add finitely many arbitrarily small atoms to each measure, so that the resulting measures are good.
\end{lemma}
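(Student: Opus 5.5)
The plan is to induct on the number of measures and reduce goodness to a finite \emph{evaluation} condition that survives small perturbations. Throughout, $W(n)$ is the index set for the tuple $(\mu_1,\ldots,\mu_n)$, and I assume (after an initial small shift if needed) that the supports are pairwise disjoint. All new atoms will be placed off the existing supports.

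\textbf{Step 1: jumps give a triangular criterion.} Each $Q_u$ is a left-continuous step function. Write $u=u'i$ with $i=u_k$, so $u'$ is a word avoiding $i$, possibly empty. At an atom $t$ of $\mu_i$ that is not an atom of any other $\mu_j$, the jump of $Q_u$ at $t$ is $\mu_i(\{t\})\,Q_{u'}(t)$; the jump is zero if $t\notin\supp\mu_{u_k}$. Now suppose
\[
a_0+\sum_{u\in W(n)} a_u Q_u\equiv 0 .
\]
Taking the jump at each $t\in\supp\mu_i$ shows that
\[
g_i:=a_i+\sum_{u'} a_{u'i}\,Q_{u'}
\]
vanishes on $\supp\mu_i$, where the sum is over nonempty injective words $u'$ avoiding $i$. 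It therefore suffices to arrange the following condition $(\ast)_i$ for every $i$: \emph{there are points $t_0,\ldots,t_r\in\supp\mu_i$ at which the evaluation matrix of the functions $1$ and $Q_{u'}$ ($u'$ avoiding $i$) is nonsingular.} Under $(\ast)_i$ for all $i$, every $a_u$ vanishes, and then $a_0=0$.

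\textbf{Step 2: construction by induction on $n$.} The base case $n=1$ holds because $Q_1(t)=\mu_1((-\infty,t))$ is nonconstant. Fix $i$. The functions $Q_{u'}$ with $u'$ avoiding $i$ depend only on $(\mu_j)_{j\neq i}$. By the induction hypothesis, after adding small atoms to those $n-1$ measures, the family $1,(Q_{u'})$ is linearly independent. Lemma~\ref{lem:lin_alg} then gives points $t_0,\ldots,t_r$ with nonzero determinant. As in the last paragraph of the proof of Lemma~\ref{lem:inserting_Xn}, I shift these points slightly to the left so that they avoid all existing supports; this does not change the values $Q_{u'}(t_j)$, since the $Q$'s are constant on small left-neighbourhoods. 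I then add atoms of mass $\varepsilon$ at the $t_j$ to $\mu_i$ and renormalise. This establishes $(\ast)_i$, and I perform this step successively for $i=1,\ldots,n$.

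\textbf{Step 3: stability, the main obstacle.} Later steps modify the measures again, so they could destroy a $(\ast)_i$ already achieved, or the independence used inside it. The key observation is that $(\ast)_i$ is an open condition: a nonzero determinant at \emph{fixed} points $t_j$, which are atoms of $\mu_i$. Each $Q_{u'}(t)$ is multilinear in the measures, so adding atoms of total mass $\le\varepsilon$ anywhere and renormalising changes every $Q_{u'}(t_j)$ by $O(\varepsilon)$. This holds uniformly, because every $Q$ is bounded by $1$ and the change is controlled by the total variation of the perturbation. Hence, if the masses at each later stage (including inside the recursive calls) are chosen small enough relative to all determinants fixed at earlier stages, every $(\ast)_i$ survives. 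The atoms at the $t_j$ stay in $\supp\mu_i$, and disjointness of supports is preserved because new points are always chosen off the current supports. The bookkeeping of this nested smallness is where care is needed, but each condition is a finite list of strict inequalities, so choosing decreasing masses suffices. The resulting tuple satisfies $(\ast)_i$ for all $i$, and Step~1 then shows it is good.
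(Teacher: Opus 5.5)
Your argument is correct, but it takes a different route from the paper.

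\textbf{The paper's route.} The paper proves the lemma in one shot. It lists $W(n)$ by non-decreasing length. For each word $v_j$ it places a block of atoms to the right of all existing supports, one atom per letter in the order of $v_j$, each added to the measure labelled by that letter. It evaluates at test points $t_j$ separating the blocks. Via the Leibniz expansion it shows that the product $\Lambda$ of all added masses arises in $\det\bigl(Q_{v_\ell}(t_j)\bigr)$ only from the identity permutation. So the determinant is a nonzero polynomial in the masses, and generic small positive masses work; the constant $1$ is handled by a point to the left of everything.

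\textbf{Your route.} You use the jump identity $Q_{u'i}(t+)-Q_{u'i}(t)=\mu_i(\{t\})\,Q_{u'}(t)$. This reduces independence of the $n$-family to two conditions for each $i$: independence of the family attached to $(\mu_j)_{j\neq i}$, and a nonsingular evaluation at atoms of $\mu_i$. In effect you run the mechanism of Lemma~\ref{lem:inserting_Xn} once per coordinate, inside an induction on $n$ with an openness/perturbation argument.

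\textbf{What each buys.} Your approach explains structurally why goodness is hereditary and reuses Lemma~\ref{lem:lin_alg}. It costs nested smallness bookkeeping. It also needs a slightly stronger induction hypothesis than the lemma as stated: total added mass below any prescribed threshold, and new atoms avoiding any prescribed finite set. Your construction does deliver both, since the number of atoms added at each level is fixed independently of the masses. The paper's construction avoids all continuity estimates. It gives the transparent count of one new atom in $\mu_\ell$ per word containing $\ell$, which feeds Remark~\ref{rem:support_bound}. It also works for arbitrary starting measures.

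\textbf{The initial shift.} Shifting the supports at the start is not permitted by the statement, which only allows adding atoms. It is also unnecessary: you only ever take jumps at the test points $t_j$, and these are new atoms of $\mu_i$ placed off all other supports.
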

\begin{proof}
It is convenient to ignore normalisation in the proof. At the end, normalising each measure multiplies each function $Q_u$ by a non-zero constant depending on $u$ which does not affect linear independence.

List $W(n)=v_1,\ldots, v_N$ in non-decreasing order of length. 

Write 
\[
    v_j = a_{j1},\ldots a_{j r_j}
\] for each $1\leq j\leq N$.
We now assign atoms to $\mu$ in the following way. Choose points in increasing order 
\[
y_{11}< \cdots y_{1 r_1}<y_{21}<\cdots<y_{2r_2}< \cdots< y_{N1}<\cdots y_{Nr_N}.
\]
larger than the supports of $\mu_1,\ldots,\mu_n$. For each $(j,k)$ add an atom of mass $\lambda_{jk}$ to $\mu_\ell$ at $y_{jk}$ if $a_{jk} = \ell$. More precisely, define a new finite measure $\mu^\lambda = (\mu_1^\lambda,\ldots \mu_n^\lambda)$ by
\[
\mu_\ell^\lambda = \mu_\ell + \sum_{(j,k)} { \lambda_{jk} \delta_{y_{j,k}}\one\{a_{jk}=\ell\}}
\]

We claim that $\mu^\lambda$ is good for arbitrarily small choices of $\lambda$. Let $B_j$ be the block of atoms corresponding to word $v_j$. For $j <N$, let $t_j$ be a point strictly between the last point of $B_j$ and the first point of $B_{j+1}$. Let $t_N$ be a point to the right of the last point of $B_N$. Now consider the $N \times N$ matrix $C$ with entries
\[
 C_{j \ell} = Q_{v_\ell}(t_j) \qquad 1 \leq j,\ell \leq N.
\]
We show that each $Q_{v_j}$ is independent by showing that $\det(C)\neq 0$. It is enough to show that the monomial
\[
\Lambda = \prod_{j=1}^N \prod_{s=1}^{r_j} \lambda_{js}
\]
has non-zero coefficient in the expansion of $\det(C)$. 

 We use the Leibniz expansion of the determinant, namely

\begin{equation} \label{eq:leibniz_expansion}
    \det(C) = \sum_{\tau \in S_N} \text{sign}(\tau) \prod_{j=1}^N C_{j,\tau(j)}.
\end{equation}

The entry $C_{jj}$ contains as an expression the product of the atoms in block $B_j$ which has mass  $\prod_{s=1}^{r_j} \lambda_{js}$. Therefore the diagonal term $C_{11}C_{22}\cdots C_{NN}$ in \eqref{eq:leibniz_expansion} contains $\Lambda$ with coefficient $+1$.

\begin{lemma} \label{lem:tau_is_id}
    Let $\tau \in S_N$ be any permutation. Suppose 
    \[
        \prod_{j=1}^N C_{j,\tau(j)}
    \]
    contains the monomial $\Lambda$. Then $\tau$ must be the identity.
\end{lemma}

Using Lemma \ref{lem:tau_is_id}, the only term in \eqref{eq:leibniz_expansion} that contains the monomial $\Lambda$ is the identity term. 
Furthermore, $\det C$ is a non-zero polynomial in the added masses $\lambda_{js}$. Therefore, its zero set has empty interior. Hence every neighbourhood of $0$ contains choices of positive $\lambda_{js}$ such that $\det(C) \neq 0$. Thus we can choose the masses $\lambda_{js}$ arbitrarily small and ensure the functions $Q_{v_1}, \ldots, Q_{v_N}$ are linearly independent.

We only need to show $1$ is linearly independent of $(Q_{v_j})_{1 \leq j \leq N}$. To see this choose a point $t_0$ to the left of all old and new support points of $\mu$. We have
\[
Q_{v_j}(t_0) = 0 \qquad \text{for all } 1\leq j \leq N.
\]
and $Q_\emptyset(t_0)=1$. Thus $Q_\emptyset$ cannot be written as a linear combination of the other functions. Therefore, the functions
\[
 1, \quad (Q_{v_j})_{1 \leq j \leq N}
\]
are linearly independent and the finite measures $\mu^\lambda=(\mu^\lambda_1,\ldots, \mu^\lambda_n)$ are good.
\end{proof}

\begin{proof}[Proof of Lemma \ref{lem:tau_is_id}]
    Consider the final block $B_N$. Every atom in $B_N$ lies after points $t_1,\ldots t_{N-1}$. Therefore, no entry in rows $1,\ldots, N-1$ of $C$ contains any mass from $B_N$. Moreover, $\Lambda$ contains every mass in $B_N$, hence each mass in $B_N$ can occur only in the final row of $C$. In particular, the entry
    \[
    C_{N,\tau(N)} := Q_{v_{\tau(N)}}(t_N)
    \]
    contains a monomial involving every mass in $B_N$.

    Viewed as a polynomial in the added masses, every monomial in $Q_{v_{\tau(N)}}(t_N)$ has degree at most $|v_N|$. Since the relevant monomial contains all $|v_N|$ masses from $B_N$ we must have 
    \[
    |v_{\tau(N)}| \geq |v_N|.
    \]
     The block $B_N$ contains $|v_N|$ many atoms and since we listed the words $(v_j)_{1\leq j \leq N}$ in length increasing order we have
    \[
    |v_{\tau(N)}| \leq |v_N|.
    \]
    However $Q_{v_{\tau(N)}}(t_N)$ must contain a term using all atoms in $B_N$ so 
    \[
    |v_{\tau(N)}| = |v_N|.
    \]
    The only word of length $|v_{\tau(N)}|$ that uses all the atoms of $B_N$ in the right order must be $v_N$. Hence
    \[
    v_{\tau(N)}=v_N,
    \]
    and thus $\tau(N)=N$.

    Now remove row $N$, column $N$ and block $N$ and apply the same argument for $N-1$. Thus $\tau(N-1)=N-1$. Continuing iteratively backwards we see
    \[
    \tau(j) = j \qquad \text{for all } 1\leq j \leq N.
    \] 
    \qedhere
\end{proof}

\begin{lemma} \label{lem:staying_good}
    Let $s \in \N$ and $\Theta \subset \R^s$ be open. Suppose there exists fixed pairwise disjoint finite sets $S_1,\ldots,S_n \subset \R$, and probability measures $\mu(\theta)=(\mu_1(\theta),\ldots,\mu_n(\theta))$ with
    \[
    \mu_i(\theta) = \sum_{x \in S_i} w_{i,x}(\theta) \delta_x, \qquad 1\leq i \leq n, \quad \theta \in \Theta,
    \]
    whose positive atom masses $w_{i,x}(\theta)$ depend smoothly on $\theta$. Further assume that for some $\theta_0 \in \Theta$,
    \[
     \rank D_\theta (F(\mu(\theta_0))) \geq r.
    \]
    Let $M \in \N$ and let $\lambda = (\lambda_1,\ldots,\lambda_M)$. Consider the new measures $\mu^\lambda=(\mu_1^\lambda,\ldots,\mu_n^\lambda)$ obtained by adding $M$ distinct Dirac masses of masses $(\lambda_j)_{1 \leq j \leq M}$ outside the supports of $\mu$ and normalising. Then 
    \[
    \rank D_\theta F(\mu^\lambda(\theta_0)) \geq r
    \]
    for sufficiently small $\lambda$.
\end{lemma}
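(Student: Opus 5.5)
The plan is a continuity argument: lower semicontinuity of rank. Write $\Lambda = \lambda_1+\cdots+\lambda_M$. Each added atom is given to one of the measures $\mu_i$, and we set $\Lambda_i$ to be the total added mass assigned to $\mu_i$. Then
\[
\mu_i^\lambda(\theta) = \frac{1}{1+\Lambda_i}\Bigl(\mu_i(\theta) + \sum_{j \text{ assigned to } i} \lambda_j \delta_{z_j}\Bigr),
\]
where $z_j$ are the new atom locations. These are fixed, distinct, and lie outside $S_1\cup\cdots\cup S_n$, so the tuple remains non-colliding for every $\theta$ and every $\lambda \geq 0$.

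First, I will show that the map $(\theta,\lambda)\mapsto F(\mu^\lambda(\theta))$ is smooth on $\Theta\times(-\epsilon,\infty)^M$ for small $\epsilon>0$. I will also show it agrees with $F(\mu(\theta))$ at $\lambda=0$. For atomic measures on fixed disjoint finite supports,
\[
p_\sigma = \sum_{x_1<\cdots<x_n} \prod_{k=1}^n w_{\sigma(k),x_k},
\]
where $x_k \in S_{\sigma(k)}\cup Z_{\sigma(k)}$ and $Z_i$ denotes the new points assigned to $\mu_i$. The order relations among the support points do not depend on $\theta$ or $\lambda$, since the supports are fixed. Hence $p_\sigma$ is a fixed polynomial in the normalised masses. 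Those masses are $w_{i,x}(\theta)/(1+\Lambda_i)$ and $\lambda_j/(1+\Lambda_i)$, which are smooth in $(\theta,\lambda)$ near $\lambda=0$. Thus $F(\mu^\lambda(\theta))$ is smooth jointly in $(\theta,\lambda)$. Setting $\lambda=0$ recovers $F(\mu(\theta))$, because atoms of mass zero contribute nothing.

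Next, I will deduce continuity of the Jacobian. Joint smoothness gives that $D_\theta F(\mu^\lambda(\theta_0))$ depends continuously on $\lambda$, and it equals $D_\theta F(\mu(\theta_0))$ at $\lambda=0$. This continuity is where joint smoothness is used: $\partial_\theta$ of a $C^1$ function of $(\theta,\lambda)$ is continuous in $\lambda$.

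Finally, I will conclude using the rank condition. Since $\operatorname{rank} D_\theta F(\mu(\theta_0)) \geq r$, some $r\times r$ minor of this matrix is nonzero. The same minor of $D_\theta F(\mu^\lambda(\theta_0))$ is a continuous function of $\lambda$, so it stays nonzero for all sufficiently small $\lambda$. This gives $\operatorname{rank} D_\theta F(\mu^\lambda(\theta_0)) \geq r$.

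The only genuine point to check is the first step: that the combinatorial structure of the expansion of $p_\sigma$ is independent of the parameters. This holds because the atom locations are fixed and the new atoms lie outside the old supports. Once that is in place, the remaining steps are routine.
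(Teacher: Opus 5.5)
Your proposal is correct and follows essentially the same route as the paper: because the atom locations are fixed and pairwise disjoint, each $p_\sigma$ is a fixed polynomial in the normalised masses, so $(\theta,\lambda)\mapsto F(\mu^\lambda(\theta))$ is jointly smooth near $(\theta_0,0)$, whence $D_\theta F(\mu^\lambda(\theta_0))\to D_\theta F(\mu(\theta_0))$ and a nonzero $r\times r$ minor stays nonzero for small $\lambda$. Your explicit expansion of $p_\sigma$ over increasing tuples of support points makes concrete the paper's assertion that $p_\sigma$ is a smooth rational function of $(\theta,\lambda)$.
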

\begin{proof}
   Since $\rank D_\theta F(\mu(\theta_0)) \geq r$ there is an $r \times r$ minor of $D_\theta F(\mu(\theta_0))$ with non-zero determinant.  Denote this determinant $\Delta (0) \neq 0$. Let $\Delta(\lambda)$ denote the determinant of the same $r \times r$ minor of $D_\theta F(\mu^\lambda(\theta_0))$.

   First denote $p_\sigma(\theta,\lambda)$ as the mass on $\sigma$ of the ranking law with parameter $\theta$ and added masses $\lambda$.

   Note that $p_\sigma(\theta,\lambda)$ is a polynomial in the masses $\lambda$ normalised by a non-zero function in a neighbourhood of $\lambda =0$. Thus, $p_\sigma(\theta,\lambda)$ is a smooth rational functon in $(\theta,\lambda)$ because all supports are fixed. Moreover $F(\mu^\lambda(\theta))$ and $F(\mu(\theta))$ are finite sums of these rational functions and are jointly smooth in a neighbourhood of $(\theta_0,0)$. Hence
   \[
   D_\theta F(\mu^\lambda(\theta_0)) \rightarrow D_\theta F(\mu(\theta_0))
   \]
   as $\lambda \rightarrow 0$ entrywise. Hence, $\Delta(\lambda) \rightarrow \Delta(0) \neq 0$ as $\lambda \rightarrow 0$.
    
    Therefore, for sufficiently small choices of $\lambda$, there is an $r \times r$ minor of $D_\theta F(\mu^\lambda(\theta_0))$ with non-zero determinant and thus $\rank D_\theta F(\mu^\lambda(\theta_0)) \geq r$.
\end{proof}

\begin{theorem} \label{thm:induction}
     For every $n \geq 1$, there exists an open set $\Theta \subset \R^{d(n)}$, fixed pairwise disjoint finite sets $S_1,\ldots,S_n \subset \R$, and probability measures
    \[
    \mu_i(\theta) = \sum_{x \in S_i} w_{i,x}(\theta) \delta_x, \qquad 1\leq i \leq n, \quad \theta \in \Theta,
    \]
    whose positive atom masses $w_{i,x}(\theta)$ depend smoothly on $\theta$, such that, for some $\theta_0 \in \Theta$,
    \[
    \rank D_\theta F((\mu_1(\theta_0),\ldots,\mu_n(\theta_0))) \geq d(n).
    \]
\end{theorem}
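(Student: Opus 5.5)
We argue by induction on $n$. For $n=1$ we have $d(1)=0$, so any single atomic probability measure with a one-point support $S_1$ works, with $\Theta$ any open subset of $\R^0$ (a point). For $n=2$ the statement also follows from the inductive step below, so suppose $n\geq 2$. Assume the statement for $n-1$: we have an open set $\Theta'\subset\R^{d(n-1)}$, disjoint finite supports $S_1,\ldots,S_{n-1}$, smoothly parameterised masses, and a point $\theta_0$ at which $\rank D_\theta F(\mu(\theta_0))\geq d(n-1)$.

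\textbf{Step 1 (making the measures good).} Apply Lemma \ref{lem:making_good} to $\mu(\theta_0)$. This adds finitely many small atoms at new fixed points, with masses $\lambda$, and makes the normalised tuple good. By Lemma \ref{lem:staying_good}, for $\lambda$ small enough the rank of $D_\theta F$ at $\theta_0$ is still at least $d(n-1)$. The proof of Lemma \ref{lem:making_good} shows that goodness holds off the zero set of a nonzero polynomial in $\lambda$, so it holds on an open dense set of small $\lambda$. Hence we can choose $\lambda$ satisfying both conditions. We keep $\lambda$ fixed, with masses independent of $\theta$ and the weights renormalised smoothly. The supports stay fixed and disjoint, and the new masses are still smooth in $\theta$.

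\textbf{Step 2 (adding $X_n$).} Apply Lemma \ref{lem:inserting_Xn} to the good tuple at $\theta_0$. This gives points $y_0,\ldots,y_D$ outside all supports and a measure $\mu_n(\beta)$ with $\beta\in V\subset\R^{D}$. Set the new parameter to be $(\theta,\beta)\in\Theta'\times V$. This is open in $\R^{d(n-1)+D(n)}=\R^{d(n)}$, and all masses are smooth and positive. Fix any $\beta^0\in V$.

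\textbf{Step 3 (rank count; the main step).} We show that $D_{\theta,\beta}F$ has rank at least $d(n)$ at $(\theta_0,\beta^0)$. Since $F\mapsto q_w(F)$ is linear, it suffices to exhibit $d(n)$ linear functionals of $F$ whose Jacobian has that rank. We use two families:
\begin{itemize}
\item the coordinates $p_\sigma$ of the law of the first $n-1$ variables, for $\sigma\in S_{n-1}$, which are the functionals $q_\sigma(F)$;
\item the functionals $q_{un}(F)$ for $u\in W(n-1)$.
\end{itemize}
The first family does not depend on $\beta$, and its $\theta$-Jacobian has rank at least $d(n-1)$ by Step 1. By Lemma \ref{lem:inserting_Xn}, the $\beta$-Jacobian of the second family has rank $D$. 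Order the rows with the first family on top and the columns with the $\theta$-columns first. The Jacobian is then block lower triangular:
\[
\begin{pmatrix} A & 0 \\ * & B\end{pmatrix},\qquad \rank A\geq d(n-1),\ \rank B=D(n).
\]
Choose a set $R$ of $d(n-1)$ independent rows of $A$ and all $D$ rows of $B$. Suppose a linear combination of the corresponding rows of the full matrix vanishes. The $\beta$-columns force the coefficients on the $B$ rows to be zero, since the rows of $B$ are independent and the top rows are zero there. The remaining combination involves only the rows in $R$, so the independence of the rows of $A$ forces those coefficients to be zero too. Therefore the total rank is at least $d(n-1)+D(n)=d(n)$.

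The delicate point is Step 1: the good perturbation and the rank preservation must be chosen simultaneously, and the supports must stay fixed so that the parametrisation remains smooth. Once that is done, Step 3 is only linear algebra.
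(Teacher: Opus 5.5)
Your proposal is correct and follows the paper's proof essentially step for step. It uses induction on $n$, then Lemma \ref{lem:making_good} together with Lemma \ref{lem:staying_good} to make the first $n-1$ measures good without losing rank, then Lemma \ref{lem:inserting_Xn} to adjoin $\mu_n(\beta)$, and finally the block lower-triangular Jacobian formed from the marginal coordinates and the functionals $q_{un}$ to get rank $d(n-1)+D(n)=d(n)$. Your explicit argument that one $\lambda$ can satisfy both conditions at once is a useful clarification of a point the paper leaves implicit: goodness holds off the zero set of a nonzero polynomial, and rank preservation holds on a neighbourhood of $0$.
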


\begin{proof}
We proceed by induction on $n$.

For $n=1$,  choose any single Dirac mass $\mu_1$. Since $d(1) = 0$ the rank condition holds trivially.

Assume the claim for $n-1$. Write $\mu(\theta) = (\mu_1(\theta), \ldots, \mu_{n-1}(\theta))$ and let $\theta_0$ be the parameter in the induction hypothesis. Apply Lemma \ref{lem:making_good} to add finitely many atoms to each measure in $\mu(\theta_0)$ and denote the added masses as $\lambda=(\lambda_1,\ldots,\lambda_M)$ where $M \in \N$. 
By Lemma \ref{lem:making_good}, $\mu^\lambda(\theta_0)$ is good for arbitrarily small $\lambda$. Let $\phi_{n-1}(\theta) = F(\mu^\lambda(\theta))$. By Lemma \ref{lem:staying_good} and the induction hypothesis, we may choose some sufficiently small $\lambda$, so that
\begin{equation} \label{eq:ind_hyp_rank_condition}
\rank D_\theta(\phi_{n-1}(\theta_0)) \geq d(n-1).
\end{equation}

Hence, there exist $d(n-1)$ distinct permutations $\sigma_1,\ldots,\sigma_{d(n-1)} \in S_{n-1}$ with the following property. Define the coordinate projections 
\[
r_j : \Delta(S_{n-1}) \rightarrow \R, \qquad r_j(p) = p_{\sigma_j}.
\]
The map $R(\theta)=(r_1(\phi_{n-1}(\theta)),\ldots,r_{d(n-1)}(\phi_{n-1}(\theta)))$ yields
\begin{equation} \label{eq:first_n-1_rank}
\rank D_\theta R(\theta_0) = d(n-1).
\end{equation}
Now apply Lemma \ref{lem:inserting_Xn} with the measures $\mu^\lambda(\theta_0)$ to get the measure $\mu_n(\beta) = \sum_{j=0}^{D(n)} \beta_j \delta_{y_j}$ with mass $\beta_j$ at $y_j$. Note that the points $(y_j)_{j=0}^{D(n)}$ are disjoint from the supports of each measure in $\mu^\lambda$. Let \[\mu^\lambda(\theta,\beta) =(\mu^\lambda_1(\theta),\ldots,\mu^\lambda_{n-1}(\theta),\mu_n(\beta)) 
\]
for $(\theta,\beta)\in \Theta \times V$.
Let $M:\Delta(S_n) \rightarrow \Delta(S_{n-1})$ be the marginal map obtained by projecting onto the first $n-1$ labels of each permutation. Let $\phi_n(\theta,\beta) =  F(\mu^\lambda(\theta,\beta))$. 

Define the map
\[
G(\theta,\beta) = \bra{ \bra{r_j (M \phi_n(\theta,\beta))}_{1 \leq j \leq d(n-1)} , \bra{q_{un}(\phi_n(\theta,\beta))}_{u \in W(n-1)}}.
\]
Note that  $M \phi_n(\theta,\beta) = \phi_{n-1}(\theta)$. In particular the coordinates $r_j(M \phi_n(\theta,\beta))$ do not depend on $\beta$.
Let $\beta^0 \in V$. At $(\theta_0,\beta^0)$ the Jacobian of $G$ has block form
\[
D_{\theta,\beta} G(\theta_0,\beta^0) = \begin{pmatrix}
A & 0\\
B & C
\end{pmatrix},
\]
where $A=D_\theta R(\theta_0)$ and 
\[
C = \bra{\frac{\partial}{\partial \beta_j} q_{un} (\phi_n(\theta_0,\beta^0))}_{u \in W(n-1), 1\leq j \leq D(n)}. 
\]
By \eqref{eq:first_n-1_rank}, $\rank A = d(n-1)$ and by Lemma \ref{lem:inserting_Xn}, $\rank C = D$. Hence,

\[
\rank D_{\theta,\beta} G(\theta_0,\beta^0) = d(n-1)+D = d(n).
\]
Each component of $G$ is a linear function of the coordinates of $\phi_n(\theta,\beta)$. Therefore,
\[
\rank D_{\theta,\beta} \phi_n(\theta_0,\beta^0) \geq d(n).
\]
Each measure in $(\mu^\lambda_1(\theta),\ldots,\mu^\lambda_{n-1}(\theta),\mu_n(\beta))$ has fixed finite pairwise disjoint supports, and their atom masses depend smoothly on $(\theta,\beta) \in \Theta \times V \subset \R^{d(n)}$.
\end{proof}

\begin{proof} [Proof of Theorem \ref{thm:dimension} lower bound]
    By Theorem \ref{thm:induction}, there is an open set $\Theta$ and a smooth family $\mu(\theta)$ of measures, with $\theta \in \Theta$  , such that $\rank D_\theta F(\mu(\theta_0)) \geq d(n)$ for some $\theta_0 \in \Theta$. Since $F(\mu(\Theta)) \subset \AA_n$, Proposition \ref{prop:inverse_func_thm} gives 
    \[
    \dim \AA_n \geq d(n).
    \]
\end{proof}

\begin{remark} \label{rem:support_bound}
The construction used in the proof of Theorem \ref{thm:dimension} gives an explicit bound on the sizes of the supports. Namely, the measures may be chosen so that
\begin{equation} \label{eq:support_bound}
|\supp(\mu_i)|\leq D(n)+1
=1+\sum_{j=1}^{n-1}\binom{n-1}{j}j!
\end{equation}
for every $1\leq i\leq n$. Indeed, in the induction step from $n$ to $n+1$, Lemma \ref{lem:making_good} adds $D(n+1)-D(n)$ atoms to each existing measure, while the new measure constructed in Lemma \ref{lem:inserting_Xn} has $D(n+1)+1$ atoms. Combining these two observations satsifies \eqref{eq:support_bound}. For $n\geq 2$, \eqref{eq:support_bound} is equal to $\lfloor e(n-1)!\rfloor$.
\end{remark}

\section{Mallows$_n(q)$ cannot be achieved for $n \geq 4$}

\begin{proof} [Proof of Theorem \ref{thm:mallows_unachievable}]
    We first show that $\text{Mallows}_4(q)$ is not achievable unless $q\in\{0,1\}$. Define 
    \[
     A:= \{X_1<X_3\} \qquad B:= \{X_2<X_4\}
    \]
    Since $X_1,X_2,X_3,X_4$ are independent, we have
    \[
    \P(A \cap B)= \P(A)\P(B).
    \]
    We first note that
    \[
    Z_4(q) = 1+3q+5q^2+6q^3+5q^4+3q^5+q^6 = (q+1)^2(q^2+1)(q^2+q+1)
    \]

    Suppose for a contradiction there exists a choice of distributions which makes Mallows$_4(q)$ achievable.
    Then
    \begin{align*}
        \P_q(A) &= \P_q(\sigma \in \{ 1324,1342,1234,1432,2134,4132,1243,1423,4123,2143,2413,4213\}) \\
        &= \frac{2q+1}{(q+1)(q^2+q+1)}
    \end{align*}
    By symmetry we have that $\P_q(A)=\P_q(B)$.
    Further,
    \begin{align*}
        \P_q(A \cap B) &= \P_q( \sigma \in \{1234,1324,1243,2134,2143, 2413\}) \\
        &= \frac{1+3q+q^2+q^3}{(q+1)^2(q^2+1)(q^2+q+1)}.
    \end{align*}
    Therefore,
    \begin{align*}
        \P_q(A \cap B) - \P_q(A)\P_q(B) &= \frac{1+3q+q^2+q^3}{(q+1)^2(q^2+1)(q^2+q+1)} - \frac{(2q+1)^2}{(q+1)^2(q^2+q+1)^2} \\
        &= \frac{q^3(q-1)^2}{(q+1)^2(q^2+1)(q^2+q+1)^2}.
    \end{align*}
    This can only equal $0$ when $q\in\{0,1\}$. Thus for $q>0, q\neq 1$ we have a contradiction.

    Now suppose $\text{Mallows}_n(q)$ was achievable. The relative order of the first $4$ indices must be achievable and also Mallows$_4(q)$. This contradicts the above.
\end{proof}

\section{Open Problems}
Let $\AA_{n,k}$ be the set of achievable laws realised by finitely supported measures whose supports have size at most $k$.
\begin{enumerate}
    \item The authors of \cite{babson} show that every law in $\AA_n$ can be realised by finitely supported measures, see Theorem $5.4$ \cite{babson}. What is the smallest $k=k(n)$ such that $\AA_n = \AA_{n,k}$? What is the asymptotic growth of $k(n)$?

    \item  What is $\dim \AA_{n,k}$ for each $k$? Remark $\ref{rem:support_bound}$ gives that $\dim \AA_{n,k} = \dim \AA_n$ for $k \geq \lfloor e(n-1)! \rfloor$. Can this upper bound on the least such $k$ be improved?
    
    \item Given any law $p \in \AA_n$, can one give an explicit algorithm that constructs finitely supported measures $\mu_1,\ldots,\mu_n$ whose ranking law is $p$? What is the computational complexity of such an algorithm? The construction in this paper achieves this only for a certain full dimensional subset of $\AA_n$.
    

\end{enumerate}

\appendix

\section{Semi-algebraic dimension and Jacobian rank criterion} \label{app:inverse_func_thm}

Let $A$ be a finite-dimensional real affine space, such that $A \cong \R^N$. A subset $S \subset A$ is called \textbf{semialgebraic} if, after choosing affine coordinates on $A$, it can be described by finitely many polynomial equalities and inequalities, using finitely many unions, intersections and complements.

A \textbf{$k$-dimensional smooth stratum} in $A$ is a subset $M \subset A$ with the following property. For every $x \in M$ there is an open neighbourhood $O \subset A$ of $x$ and a smooth bijection $\phi: O \rightarrow \phi(O) \subset \R^N$ with smooth inverse such that
\[
\phi(M \cap O) = \phi(O) \cap (\R^k \times \{0\}^{N-k}).
\]
In other words, near every point, $M$ can be made, by a smooth change of coordinates, to look like an open set of $\R^k \times \{0\}^{N-k}$. We write $\dim M =k$ to denote the dimension of such a stratum.

For every semialgebraic set $S$ there is a finite partition of $S$ into smooth strata $(S_\alpha)_{\alpha \in I}$. The dimension of $S$ is the maximum of the dimensions of the strata, i.e.
\[
\dim S = \max_{\alpha \in I}\dim S_\alpha.
\]
This number is independent of the choice of partition. 

\begin{proposition} \label{prop:inverse_func_thm}
    Let $U \subset \R^d$ be open, $S \subset A$ be semialgebraic and let $F: U \rightarrow A$ be a smooth map such that $F(U) \subset S$. Suppose there exists $\theta_0 \in U$ such that
    \[
        \rank D F(\theta_0) \geq d.
    \]
    Then $\dim S \geq d$.
\end{proposition}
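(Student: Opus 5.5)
The plan is to use the constant rank theorem to produce a $d$-dimensional smooth piece of $F(U)$. Then the appendix definition of dimension will let us push that piece into one of the strata of $S$.

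\textbf{Step 1: reduce to an immersion.} Since $F$ is smooth and $\operatorname{rank} DF(\theta_0)\geq d$, and the rank of $DF$ is at most $d$ because $U\subset\R^d$, we have $\operatorname{rank} DF(\theta_0)=d$. Some $d\times d$ minor of $DF$ is nonzero at $\theta_0$, so by continuity it is nonzero on an open neighbourhood $U_0\ni\theta_0$. Thus $F|_{U_0}$ is an immersion. Applying the inverse function theorem to the coordinate projection $\pi\circ F$ onto the $d$ coordinates selected by the minor, and shrinking $U_0$, the map $\pi\circ F$ becomes a diffeomorphism of $U_0$ onto an open set of $\R^d$. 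Hence $F(U_0)$ is the graph of a smooth map over an open subset of $\R^d$. It is therefore a $d$-dimensional smooth stratum, in particular a $d$-dimensional embedded submanifold contained in $S$.

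\textbf{Step 2: compare with a stratification of $S$.} Take a finite partition of $S$ into smooth strata $S_\alpha$ with $\dim S_\alpha=k_\alpha$, as in the appendix. Suppose for contradiction that every $k_\alpha<d$. The set $P:=(\pi\circ F)(U_0)$ is an open subset of $\R^d$, so it has positive Lebesgue measure. Write $M=F(U_0)$ and let $\psi=(\pi\circ F)^{-1}$, a diffeomorphism from $P$ onto $U_0$; then $M=\bigcup_\alpha (M\cap S_\alpha)$, so $P=\bigcup_\alpha \pi(M\cap S_\alpha)$. Each $S_\alpha$ is a $k_\alpha$-dimensional submanifold, so it is second countable and covered by countably many charts. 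The linear projection $\pi$ restricted to such a chart is a smooth map from a $k_\alpha$-dimensional manifold into $\R^d$. Since $k_\alpha<d$, Sard's theorem (or simply the fact that smooth images of lower-dimensional manifolds have measure zero) shows that $\pi(S_\alpha)$ is Lebesgue null in $\R^d$. This is the key point, and it avoids any need for the strata to be compatible with $M$. A finite union of null sets is null, contradicting that $P$ has positive measure. Hence some $k_\alpha\ge d$, and $\dim S=\max_\alpha k_\alpha\geq d$.

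\textbf{Main obstacle.} The delicate point is Step 2. The definition of dimension uses an arbitrary stratification, and a naive argument would try to place an open piece of $M$ inside a single stratum, which requires genericity. The measure-zero argument via projection sidesteps this. The facts needed are that each stratum is a countable union of coordinate patches and that smooth maps from $\R^k$ to $\R^d$ with $k<d$ have null image. Both are standard, since such a map extends trivially to $\R^d$ with vanishing Jacobian on a null product set.
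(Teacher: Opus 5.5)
Your proof is correct, and it finishes differently from the paper. Step 1 matches the paper: you choose $d$ coordinates on which $D(\pi\circ F)(\theta_0)$ is invertible and apply the inverse function theorem, obtaining a nonempty open set $P=(\pi\circ F)(U_0)\subset \pi(S)$.

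From there the paper uses two cited facts about semialgebraic dimension (from Bochnak--Coste--Roy). First, the semialgebraic set $\pi(S)\subset\R^d$ has nonempty interior and so has dimension $d$. Second, dimension does not increase under semialgebraic maps such as the linear projection $\pi$.

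You instead argue directly from the appendix's stratification definition. If every stratum had $k_\alpha<d$, each $\pi(S_\alpha)$ would be a countable union of smooth images of open subsets of $\R^{k_\alpha}$, hence Lebesgue-null. Finitely many null sets cannot cover the open set $P$.

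Your route is self-contained relative to the appendix. It needs only the existence of a finite partition into smooth strata and the fact that smooth images of lower-dimensional manifolds are null. It never uses that $\pi(S)$ is semialgebraic, and it would still work with countably many strata. The paper's route is shorter because it delegates the real content to the cited monotonicity theorem.

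Two small remarks. Your observation in Step 1 that $F(U_0)$ is an embedded submanifold is never used, since Step 2 only needs $P$ open and contained in $\pi(S)$. In your last sentence, the extension $(x,z)\mapsto f(x)$ does not have vanishing Jacobian. Its Jacobian has rank at most $k<d$, so its determinant vanishes and every point is critical for Sard. Alternatively, $W\times\{0\}$ is null and locally Lipschitz maps send null sets to null sets.
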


\begin{proof}
    Choose affine coordinates on $A$ so that $A \cong \R^N$. Since $\text{rank} D F(\theta_0)\geq d$ there exists a linear projection $\pi : \R^N \rightarrow \R^d$ such that $D(\pi \circ F):\R^d \rightarrow \R^d$ is invertible. By the inverse function theorem, there is an open neighbourhood $V \subset U$ of $\theta_0$ such that $\pi \circ F(V)$ is open in $\R^d$. Since $F(V) \subset S$, $(\pi \circ F)(V) \subset \pi(S)$. Hence $\pi(S)$ contains an open set. Since $\pi$ is linear, $\pi(S)$ is semialgebraic and contains a nonempty open subset of $\R^d$. Thus $\dim \pi(S) \geq d$. Moreover, semialgebraic dimension is non-increasing under semialgebraic maps, in particular under linear projections. Hence,
    \[
    d \leq \dim \pi(S) \leq \dim S.
    \]
\end{proof}

More details can be found in Chapter $2$ and Chapter $9$ in Bochnak, Coste and Roy \cite{bochnak_coste_roy_real_algebraic_geometry}.

\section{Proof of Theorem \ref{thm:radford}} \label{app:radford}
\begin{proof} [Proof of Theorem \ref{thm:radford}]
    Clearly $w \in \ell_1 \circ \ell_2 \circ \cdots \circ \ell_s$. Let $v \in \ell_1 \circ \ell_2 \circ \cdots \circ \ell_s$ with $v \neq w$. Let $r$ be the first position $v$ and $w$ differ. There is some index $h$ so that the $r$th letter of $w$ lies in $\ell_h$. Let $s$ be the suffix of $\ell_h$ starting with the $r$th letter of $w$.

    At position $r$, $v$ cannot take a letter from any earlier factor $\ell_1,\ldots,\ell_{h-1}$ since they agreed on them. Moreover at position $r$, $v$ cannot take a letter from $\ell_h$ since the order of $\ell_h$ must be preserved. Thus there is some $j>h$ such that the $r$th letter of $v$ is in $\ell_j$. In fact it must be the first letter of $\ell_j$ since otherwise $\ell_j$ would be out of order in $v$.

    Thus it suffices to show that the first letter of $s$ is larger than the first letter of $\ell_j$.

    If $s=\ell_h$ then $s \geq \ell_j$ by Theorem \ref{thm:lyndon_factorisation}. Since $\ell_h$ and $\ell_j$ contain different letters they cannot start with the same letter and so the $r$th letter of $w$ is strictly larger than the $r$th letter of $v$.

    If instead $s$ is a proper suffix of $\ell_h$ then from the defining property of Lyndon words we have $s>\ell_h\geq \ell_j$ and we reach the same conclusion.
\end{proof}

\bibliographystyle{plain} 
\bibliography{refs.bib} 

\end{document}